\newtheorem{thm}{Theorem}
\numberwithin{thm}{section}
\newtheorem{lem}[thm]{Lemma}
\newtheorem{prop}[thm]{Proposition}
\newtheorem{rem}[thm]{Remark}
\theoremstyle{definition}
\newtheorem{defi}[thm]{Definition}
\newtheorem{eg}[thm]{Example}
\newcommand{\n}{\null}
\renewcommand{\b}{\mbox{$\bullet$}}
\renewcommand{\l}{\lambda}
\newcommand{\e}{\emptyset}
\newcommand{\cover}{\lessdot}
\newcommand{\rd}{\mathrm{R2}\delta_n}
\newcommand{\rk}{\mathrm{R}k\delta_n}
\newcommand{\rkk}{\mathrm{R}(k+1)\delta_n}
\newcommand{\ro}{\mathrm{R\Lambda}_n}
\newcommand{\ds}{2\delta_n}
\newcommand{\ks}{k\delta_n}
\newcommand{\dn}{\delta_n}
\newcommand{\bso}{\mathrm{bso}}
\renewcommand{\P}{\Phi_n}
\newcommand{\Pk}{\Phi^{(k)}_n}
\newcommand{\Tn}{\mathcal{T}_{n}}
\newcommand{\T}{\mathcal{T}_{n+1}}
\newcommand{\Tkn}{\mathcal{T}^{(k)}_{n}}
\newcommand{\Tk}{\mathcal{T}^{(k)}_{n+1}}
\newcommand{\Tkkn}{\mathcal{T}^{(k+1)}_{n}}
\renewcommand{\S}{\mathfrak{S}_n}
\renewcommand{\SS}{\mathfrak{S}_{n+1}}
\newcommand{\Proj}{\Pi_n}
\newcommand{\p}{p_n(x)}
\newcommand{\rdl}{\mathrm{R_{L}2}\delta_n}
\newcommand{\rdr}{\mathrm{R_{R}2}\delta_n}
\newcommand{\bor}{\mathrm{bor}}
\renewcommand{\ne}{\nearrow}
\newcommand{\se}{\searrow}
\newcommand{\D}{\mathcal{D}}
\newcommand{\s}{\sigma}
\newcommand{\prof}{\nabla(\s)}
\newcommand{\profp}{\nabla(\s')}
\newcommand{\profs}[1]{\nabla(\s^{(#1)})}
\newcommand{\proft}[1]{\nabla(\tau^{#1})}
\newcommand{\proftlist}{(\proft{(n-1)},\ldots, \proft{(0)})}
\newcommand{\f}{f_\s}
\newcommand{\fp}{f_{\s'}}
\title{Bijection between Increasing Binary Trees and Rook Placements on Double Staircases}
\author{Bishal Deb\footnote{bishal.deb.19@ucl.ac.uk}\footnote{Dept. of Mathematics, University College London}}
\date{January 13, 2023}
\begin{document}

\maketitle

\begin{abstract}
In this paper, we shall construct a bijection between rook placements on double staircases (introduced by Josuat-Verg\`es in 2017) and increasing binary trees. We introduce two subclasses of rook placements on double staircases, which we call left and right-aligned rook placements. We show that their enumeration, while keeping track of a certain statistic, gives the $\gamma$-vectors of the Eulerian polynomials. We conclude with a discussion on a different bijection that fits in very well with our main bijection, and another discussion on generalising our main bijection. Our main bijection is a special case of a bijection due to Tewari (2019).
\end{abstract}

\section{Introduction}

For a given $n\in\mathbb{N}_{>0}$, consider the partition $(2n, 2(n-1),\ldots ,2)$ and let $\ds$ denote its Young diagram in French notation. We call this diagram the \emph{double staircase} on $n$ rows. By a \emph{rook placement on $\ds$}, we mean a placement of $n$ non-attacking rooks in $\ds$ such that each row has exactly one rook and each column has at most one rook. Rook placements on $\ds$ were studied by Josuat-Verg\`es in relation to Stammering Tableaux in \cite{stamtab2017}.

In this paper, we shall state a bijection between rook placements on $\ds$ and increasing binary trees on $n+1$ vertices. Our bijection is rather simple, and is, in fact, a special case of a bijection constructed by Tewari in \cite{Tewari2019}. Tewari's bijection is between maximal rook placements on the board $(n-1)\times kn$ and labelled rooted plane $k$-ary trees. We obtain our bijection by taking $k=2$ and restricting the bijection to increasing trees. However, our exposition of this bijection is quite different from that of Tewari's, and we state it in a manner more suited for our applications.

We then introduce two subclasses of rook placements on $\ds$, which we call \emph{left-aligned} and \emph{right-aligned rook placements}. As an application of our bijection, we will show that the left-aligned and right-aligned rook placements on $\ds$ are equidistributed with respect to a certain statistic and are, in fact, given by the $\gamma$-vector of the $(n+1)^{\text{st}}$ Eulerian polynomial. The study of left-aligned and right-aligned rook placements was motivated by looking at a ``projection'' map from stammering tableaux to oscillating tableaux (first introduced as up-down tableaux in \cite{Sun86}) and trying to understand the image and fibres of this map.

Let us now state our two main theorems before moving over to the more technical parts of the paper; for this, we introduce a few definitions and notations. 

Let $\rd$ denote the set of all rook placements on $\ds$, and let $\Tn$ denote the set of all increasing binary trees on $n$ vertices. 

\begin{defi} Given a Young diagram in French notation, we define a \emph{block} to be a maximal collection of columns with the same height, and the size of a block is its number of columns. By blocks of a rook placement, we shall refer to the blocks of its underlying Young diagram.
\end{defi}

Notice that for a rook placement $R\in\rd$, all blocks of $R$ are of size $2$. We shall index the blocks in $R$ from left to right.

\begin{thm}
There is a bijection $\P$ between $\rd$, the set of all rook placements on the double staircase $\ds$, and the set $\T$, the set of all increasing binary trees on $n+1$ vertices with labels from the set $\{1,\ldots,n+1\}$, in which the following correspondence between properties of the $i^{\text{th}}$ block in $\P(R)$ in $R\in \rd$ and the vertex $i$ in $\P(R)$ hold.

\begin{tabular}{l|l}
\hline
  For a rook placement $R$ on $\ds$, &  For an increasing binary tree $\P(R)$ on \\
  the $i^{\text{th}}$ block in $R$ has & $n+1$ vertices,   vertex $i$ in $\P(R)$ has\\ 
\hline
\hline
 -- rooks in both columns & -- two children\\
 -- no rook in either column & -- no children \\
 -- a rook in left column but no rook & -- a left child but no right child \\
in right column &\\
 -- a rook in right column but no rook & -- a right child but no left child \\
in left column &\\
 \hline
\end{tabular}
\label{thm.mainbij}
\end{thm}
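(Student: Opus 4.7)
The plan is to define $\P$ directly by a row-to-vertex dictionary. Given $R\in\rd$, let $\P(R)$ be the tree rooted at $1$ on $\{1,\ldots,n+1\}$ in which, for each $v\in\{2,\ldots,n+1\}$, vertex $v$ is declared a left (resp.\ right) child of vertex $b$ exactly when the rook of $R$ in row $n-v+2$ sits in column $2b-1$ (resp.\ $2b$). Equivalently, reading the rows of $\ds$ from top ($r=n$) to bottom ($r=1$) assigns the vertex labels $2,3,\ldots,n+1$ in order, and the column of each rook encodes both its parent's label via the block index $b=\lceil c/2\rceil$ and its side via the parity of $c$.

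I would then verify that $\P(R)$ is a valid increasing binary tree. The one-rook-per-row condition ensures each non-root vertex has exactly one parent; the at-most-one-rook-per-column condition ensures no two vertices share both parent and side, so every vertex has at most one left child and at most one right child. For the increasing property, any cell in row $r$ of $\ds$ lies in a column of height at least $r$, forcing $\lceil c/2\rceil\leq n-r+1$; setting $v=n-r+2$ gives parent $b\leq v-1<v$, so labels strictly increase along every root-to-leaf path.

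For the inverse, given $T\in\T$ I would reverse the recipe: for each $v\in\{2,\ldots,n+1\}$ with parent $p_v$ in $T$ and side $s_v\in\{L,R\}$, place a rook in row $n-v+2$ of column $2p_v-1$ (if $s_v=L$) or $2p_v$ (if $s_v=R$). The inequality $p_v<v$ again ensures the cell lies inside $\ds$, and the tree condition that no two vertices share both parent and side guarantees non-attacking columns. The correspondence in the table then follows by inspection: the left column of block $b$ is occupied iff some vertex $v$ has $p_v=b$ and $s_v=L$ iff vertex $b$ has a left child, and analogously for the right; the four rows of the table are exactly the four possibilities for the pair (left column occupied?, right column occupied?) of block $b$.

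The main obstacle is simply keeping the indexing straight: one must verify that the staircase condition ``block $b$ has height $n-b+1$'' is equivalent, under the shift $v=n-r+2$, to the strict-inequality condition $p_v<v$ on tree edges. Once this arithmetic identity is in place, the well-definedness, the construction of the inverse, and the block-vertex correspondence are all routine.
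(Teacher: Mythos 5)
Your proposal is correct and is essentially the paper's own proof: your map is exactly the bijection $\P$ of the paper (your label $v=n-r+2$ agrees with the paper's ``rook in row $j$ gives child $j+1$'' once the row-indexing conventions are reconciled), and your verification---one rook per row gives each non-root vertex a parent, at most one rook per column gives at most one left/right child, the staircase shape forces parents to have smaller labels, and the explicit inverse placing a rook for each (parent, side) pair---mirrors the paper's lemma and inverse map $\Psi_n$ step for step. The only difference is cosmetic: you index rows from the bottom and phrase the construction row-by-row with the formula $b=\lceil c/2\rceil$, whereas the paper indexes rows from the top and phrases it block-by-block.
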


We say that a rook placement $R\in \rd$ is \emph{left-aligned} if none of its blocks contains a rook in its right column without having a rook in its left column. Similarly, we say that a rook placement $R\in \rd$ is \emph{right-aligned} if none of its blocks contains a rook in its left column without having a rook in its right column.
Note that a rook placement may be both left and right aligned if each pair of columns forming a block contains zero or two rooks.
Let $\rdl$ be the subset of $\rd$ consisting of left-aligned rook placements, and let $\rdr$ be the subset of $\rd$ consisting of right-aligned rook placements. 

Given $R\in \rd$, let $\bor(R)$ denote the number of blocks with one rook in $R$. It is not difficult to see that the number of left-aligned rook placements on $\ds$ with $k$ blocks having exactly one rook is equal to the number of right-aligned rook placements on $\ds$ with $k$ blocks having exactly one rook. With this in mind, we can state our second main theorem which is an application of Theorem~\ref{thm.mainbij}.

\begin{thm} Let $\p$ be the polynomial
\begin{equation}
\p \coloneqq \sum_{T\in \rdl} x^{\bor(T)} =\sum_{T\in \rdr} x^{\bor(T)}.
\end{equation}

\begin{itemize}
\item[\textup{(a)}] The coefficient of $x^{n-2k}$ in $\p$ is the number of permutations on $n+1$ letters with $k$ descents such that every descent is a peak. This is the also same as the number of $n+1$ letters with $k$ ascents such that every ascent is a valley.

\item[\textup{(b)}] The coefficients of $x^{n-2k}$ are equal to the $\gamma$-vectors of the Eulerian polynomial and the following identity holds:
\begin{equation}
x^{n/2}p_n\left(\dfrac{x+1}{\sqrt{x}}\right) = S_{n+1}(x)
\end{equation}
where $S_n(x)$ is the $n^{\text{th}}$ Eulerian polynomial (see Definition~\ref{defi.eulerian}).
\end{itemize}
\label{thm.mainapp}
\end{thm}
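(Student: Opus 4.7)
The plan is to compose the bijection $\P$ of Theorem~\ref{thm.mainbij} with the classical in-order-reading bijection between $\T$ and $\SS$, and to read off part (a) from a dictionary of local statistics; part (b) will then be a direct substitution into the Foata--Strehl gamma expansion.

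First I would establish the dictionary. If $\sigma\in\SS$ is the in-order reading of $T\in\T$ and $v$ is the vertex of $T$ at position $i$, a standard argument using the increasing property of $T$ shows that, under the boundary convention $\sigma(0)=\sigma(n+2)=0$, the vertex $v$ has a left child iff $\sigma(i-1)>\sigma(i)$, and has a right child iff $\sigma(i)<\sigma(i+1)$. The four possible child-configurations of $v$ therefore match exactly the four local profiles of $\sigma$ at position $i$: two children $\leftrightarrow$ valley, no children $\leftrightarrow$ peak, left child only $\leftrightarrow$ double descent, right child only $\leftrightarrow$ double ascent. Composing with $\P$, the same four-way correspondence matches the $i$-th block of a rook placement $R\in\rd$ with the local profile of the associated permutation at position $i$.

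Specialising, $\rdr$ corresponds bijectively to the set of $\sigma\in\SS$ having no double descent, and $\bor(R)$ becomes the number of double ascents of the image. A short counting argument then pins down the exponent: for any $\sigma\in\SS$ with no double descent and $\mathrm{des}(\sigma)=k$, the extended walk $(0,\sigma(1),\ldots,\sigma(n+1),0)$ has $n+2$ steps, starts with an ascent and ends with a descent, and ``no double descent'' says exactly that every descending run has length $1$. This forces the walk to have $k+1$ descending runs (hence also $k+1$ ascending runs), giving $k+1$ peaks, $k$ valleys, and $n-2k$ ``continuations'' of ascending runs, i.e., $n-2k$ double ascents. Therefore the coefficient of $x^{n-2k}$ in $\p$ is the number of $\sigma\in\SS$ with no double descent and $\mathrm{des}(\sigma)=k$, which is exactly the number of permutations of $[n+1]$ with $k$ descents each of which is a peak. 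The symmetric statement for ascents and valleys follows identically from the $\rdl$-side (or by the reversal involution $\sigma\mapsto\sigma^R$), completing part (a).

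Part (b) is then a single substitution. The Foata--Strehl gamma expansion of the Eulerian polynomial gives $S_{n+1}(x)=\sum_{k\geq 0}\gamma_{n+1,k}\,x^k(1+x)^{n-2k}$, where $\gamma_{n+1,k}$ is exactly the count obtained in part (a). Writing $\p=\sum_k\gamma_{n+1,k}\,x^{n-2k}$, the substitution $x\mapsto(x+1)/\sqrt{x}$ followed by multiplication by $x^{n/2}$ sends each $x^{n-2k}$ to $x^k(1+x)^{n-2k}$, producing the stated identity. The step I expect to require the most care is the verification of the local dictionary between child-configurations and permutation profiles at the boundary positions $i=1$ and $i=n+1$, so that the boundary-aware meaning of ``double descent'' genuinely aligns with the phrase ``every descent is a peak''; once the dictionary is in place, the rest is an elementary run-structure count and a one-line substitution.
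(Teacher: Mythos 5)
Your proposal is correct and takes essentially the same route as the paper: compose the bijection of Theorem~\ref{thm.mainbij} with Stanley's bijection between $\T$ and $\SS$ (the paper's Table~\ref{tab.propcorrespondence}), use the four-way dictionary to identify aligned rook placements with permutations having no double falls (resp.\ no double rises) and $\bor$ with the number of double rises (resp.\ double falls), and deduce (b) by the same substitution $x\mapsto (x+1)/\sqrt{x}$ compared against the cited $\gamma$-vector expansion. The only difference is bookkeeping --- you obtain the exponent $n-2k$ by a run-structure count on the permutation side, whereas the paper gets it from its small lemma that a rook placement has equally many empty blocks as two-rook blocks --- and the boundary caveat you rightly flag (that ``every descent is a peak'' must be read with the convention $\sigma_0=\sigma_{n+2}=0$, i.e., as ``no double falls'') is present, unremarked, in the paper's own proof as well.
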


We provide motivation for our work while stating the necessary results and introducing notation in Section~\ref{sec.prelim}. We then state our main bijection and prove our two main theorems in Section~\ref{sec.proofs}. This is followed by some discussion in Section~\ref{sec.comments}, firstly on an alternate bijective proof of Theorem~\ref{thm.mainapp}, and then on generalising our main bijection in Theorem~\ref{thm.mainbij}.

\section{Preliminaries}\label{sec.prelim}

Given a partition $\l = (\l_1,\l_2,\ldots,\l_k)$ where $\l_1\geq \l_2\geq \cdots \geq \l_k$, its \emph{Young diagram in French notation} is a diagramatic representation of $\l$ drawn as follows:

We put a row of $\l_1$ squared cells, just above it we put a row of $\l_2$ squared cells which begin just above the first cell of the previous row and so on.

Throughout this paper, whenever we say Young diagram of a partition $\l$, we shall refer to its Young diagram in French notation, unless mentioned otherwise.

Now we define oscillating tableaux (first introduced as up-down tableaux in \cite{Sun86}) and stammering tableaux (introduced in \cite{stamtab2017}) from which this paper was motivated.

\begin{defi} An \emph{oscillating tableaux} of size $n$ is a finite sequence of partitions $(\l^{(0)},\ldots,\l^{(n)})$ where $\l^{(0)} = \l^{(n)} = \e$ such that $\l^{(i)} \cover \l^{(i+1)}$ or $\l^{(i+1)}\cover \l^{(i)}$. Here $\cover$ is the covering relation on the Young's lattice, i.e., $\l \lessdot \mu$ if and only if the Young diagram of $\mu$ contains the Young diagram of $\l$ and has exactly one extra cell. 
\end{defi}

\begin{eg}
\begin{equation}
\left(\e,\yng(1),\yng(2),\yng(1),\yng(1,1),\yng(1),\e \right)\end{equation}
is an example of an oscillating tableau of size $6$.
\label{eg.osc}	
\end{eg}

\begin{defi} A \emph{stammering tableaux} of size $n$ is a finite sequence of partitions $(\l^{(0)},\ldots,\l^{(3n)})$ where $\l^{(0)} = \l^{(3n)} = \e$ such that 
\begin{itemize}
\item if $i\equiv 0 \text{ or } 1\pmod{3}$ then either $\l^{(i)} \cover \l^{(i+1)}$ or $\l^{(i)} = \l^{(i+1)}$,
\item if $i\equiv 2\pmod{3}$ then $\l^{(i+1)} \cover \l^{(i)}$.
\end{itemize}
\end{defi}

\begin{eg}
\begin{equation}
\left(\e,\yng(1),\yng(2);\yng(1),\yng(1,1),\yng(1,1);\yng(1),\yng(1),\yng(1);\e\right)\end{equation}
is an example of a stammering tableau of size $3$.
\label{eg.stam}
\end{eg}

Given a stammering tableau, one can obtain an oscillating tableau by removing multiple consecutive occurrences of the same Young diagrams. For example, the oscillating tableau in Example~\ref{eg.osc} can be obtained from Example~\ref{eg.stam} by removing the multiple ocurrences of the partitions $(1,1)$ and $(1)$.

We want to determine the image and fibres of this map from the set of stammering tableaux to the set of oscillating tableaux. Roby \cite{Roby91} and Krattenthaler \cite{Krattenthaler2016} used Fomin's growth diagrams \cite{Fomin1988} to show that oscillating tableaux correspond to rook placements on some Young diagrams; Josuat-Verg\`es \cite{stamtab2017} also used Fomin's growth diagrams to show that stammering tableaux correspond to rook placements on $\ds$. Using their correspondences, we translate our problem of determining the image and fibres of this map to the setting of rook placements in Young diagrams, in which it becomes easier to answer. 

A \emph{rook placement in a Young diagram} of a partition $\l$ is a placement of rooks in the cells of the Young diagram such that no two rooks are in the same row or column. Let $\Lambda_n$ denote the set of Young diagrams with $n$ rows and $n$ columns and let $\ro$ denote the set of rook placements in Young diagrams in $\Lambda_n$ such that each row and column has exactly one rook. Also, recall that $\rd$ is the set of rook placements on $\ds$ with exactly one rook in each row and at most one rook in each column.

The following proposition acts as a translation from oscillating and stammering tableaux to rook placements via Fomin's local rules:

\begin{prop}
\begin{enumerate}
\item \textup{\cite[Theorem~2]{Krattenthaler2016}} The set of all oscillating tableaux of size $n$ is in bijection with $\ro$.

\item \textup{\cite[Section~2]{stamtab2017}} The set of all stammering tableaux of size $n$ is in bijection with $\rd$.

\end{enumerate}

\end{prop}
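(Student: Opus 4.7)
The plan is to prove both bijections via Fomin's growth-diagram machinery. I would start by recalling Fomin's local rules: for each unit cell inside a Young diagram, with partitions assigned to the four corners and a marker indicating whether the cell contains a rook, there is a deterministic, reversible rule that produces the NE corner partition from the SW, SE, and NW corner partitions together with the rook marker. The rule is set up so that partitions at adjacent lattice points differ by at most one box (with equality allowed), and so that given any three of the four corner partitions together with the rook datum, the fourth is uniquely determined.

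For part (1), given a rook placement $R$ on a Young diagram $Y \in \Lambda_n$, I would assign the empty partition $\emptyset$ to every lattice point on the ``outer'' (NW) boundary of $Y$, and then propagate partitions inward cell by cell using the local rules. Reading the sequence of partitions along the ``inner'' (SE) staircase boundary of $Y$, from one corner of the ambient $n \times n$ square to the other, then produces a finite sequence of partitions beginning and ending at $\emptyset$. The hypothesis that $Y$ contains exactly one rook per row and one per column forces consecutive partitions in this sequence to be related by a covering relation in Young's lattice, so the output is an oscillating tableau of size $n$. Reversibility of Fomin's rules yields the inverse map, starting from an oscillating tableau along the boundary and propagating the other direction.

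For part (2), the key observation is that the double staircase $\ds$ has blocks of width $2$, so its SE boundary naturally decomposes into $n$ triples of unit steps (two horizontal then one vertical, say). Applying the same growth procedure, with the sole modification that cells without a rook are permitted (this is exactly the ``at most one rook per column'' relaxation defining $\rd$), the reading along the boundary produces $3n+1$ partitions. The empty-cell case of Fomin's rule preserves the partition along an edge, so within each triple the first two steps can each be either a covering or an equality while the third step is forced to be a strict covering downward (since the corresponding row of $\ds$ always contains a rook): this is precisely the stammering condition. Reversibility again gives the inverse.

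The main technical obstacle, were I to write this out in full, is the case-by-case verification of Fomin's local rules: one must check that for every configuration of the three input partitions and the rook marker the output partition is well-defined, that the rule is an involution on the four-corners data, and that the boundary reading genuinely produces the claimed combinatorial objects. This is exactly the verification performed by Krattenthaler in \cite{Krattenthaler2016} for the oscillating case and by Josuat-Verg\`es in \cite{stamtab2017} for the stammering case; my plan would be to write out the local-rule table explicitly and then run the verification cell by cell, exploiting the uniform structure of the $\ds$ boundary to reduce the stammering case to the oscillating one on an enlarged lattice path.
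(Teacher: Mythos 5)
Your growth-diagram sketch is exactly the approach the paper relies on: the paper does not prove this proposition itself, but cites Krattenthaler and Josuat-Verg\`es, who establish the two bijections via Fomin's local rules precisely as you describe (empty partitions on the straight sides of the diagram, cell-by-cell propagation, reading the tableau along the jagged boundary, with the empty columns of $\ds$ producing the allowed equalities and the always-occupied rows producing the forced strict down-steps of the stammering condition). Your outline is correct and matches the cited proofs, so nothing further is needed.
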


In fact, due to \cite[Theorem~2]{Krattenthaler2016} our original projection map can be translated into the following:

Let $R\in \rd$  be a rook placement on the double staircase $\ds$. Let $\Proj: \rd\to \ro$ be the map such that $\Proj(R)$ is the rook placement obtained by removing all empty columns of $R$. 

We are interested in determining the image of $\Proj$ and its fibres.

\begin{eg} An example of  a rook placement $R$ in $2\delta_5$ and the corresponding oscillating tableaux $\Pi_5(R)$ is drawn in Figure~\ref{fig.projrook}.

\end{eg}

\begin{figure}
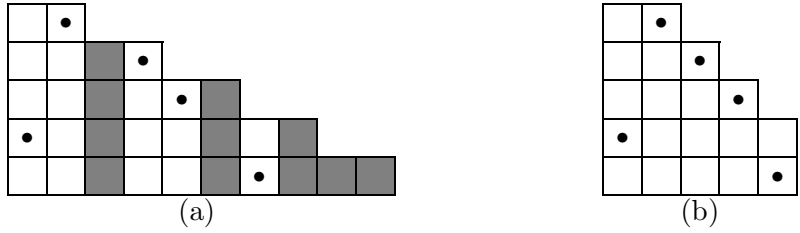

\centering
\begin{minipage}{.4\textwidth}
\centering
\scalebox{0.8}{
\begin{ytableau}
\n & \b\\
\n & \n & *(gray)\n & \b\\
\n & \n & *(gray)\n & \n & \b & *(gray)\n\\
\b & \n & *(gray)\n & \n & \n & *(gray)\n & \n & *(gray)\n\\
\n & \n & *(gray)\n & \n & \n & *(gray)\n & \b & *(gray)\n & *(gray)\n & *(gray)\n\\
\end{ytableau}}\\
\small(a)
\end{minipage}%
\begin{minipage}{.5\textwidth}
\centering
\scalebox{0.8}{
\begin{ytableau}
\n & \b\\
\n & \n  & \b\\
\n & \n &  \n & \b \\
\b & \n &  \n & \n &  \n \\
\n & \n &  \n & \n  & \b  \\
\end{ytableau}
}\\
\small(b)
\end{minipage}
\caption{Figure~(a) is a rook placement $R$ on $2\delta_5$ and Figure~(b) its projection $\Pi_5(R)$. The empty columns of $R$ are shaded in gray.}
\label{fig.projrook}
\end{figure}

Notice that, for $R\in\rd$, all blocks of $R$ are of size $2$, and the blocks of $\Proj(R)$ are of size at most $2$. Also, given $S\in \ro$ such that the maximum size of a block is $2$, we can construct $R\in \rd$ such that $\Proj(R) = S$.

We can now determine the image and fibres of the projection map as follows:

\begin{prop} The image $\Proj(\rd)$ is the set of all rook placements $S\in\ro$ such that each block is of size at most $2$.
\label{prop.projimage}
\end{prop}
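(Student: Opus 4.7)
My plan is to prove the two inclusions separately. For the forward inclusion $\Proj(\rd) \subseteq \{S \in \ro : \text{every block has size} \leq 2\}$, I would begin by observing that the double staircase $\ds$ has exactly two columns of each height $h \in \{1, \ldots, n\}$, so all blocks of $\ds$ have size $2$ and adjacent blocks have distinct heights. Given $R \in \rd$, each block of $\ds$ contains $0$, $1$, or $2$ rooks, so after deleting its empty columns it contributes $0$, $1$, or $2$ columns to $\Proj(R)$, all of the same height. Since the heights of adjacent blocks in $\ds$ are distinct, columns surviving from different blocks of $\ds$ cannot merge into a single block of $\Proj(R)$; hence every block of $\Proj(R)$ has size at most $2$.

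For the reverse inclusion, given $S \in \ro$ with all blocks of size at most $2$, I would construct an explicit preimage. For each $h \in \{1, \ldots, n\}$, let $k_h \in \{0, 1, 2\}$ denote the number of columns of $S$ of height $h$; since $S$ has $n$ columns one has $\sum_{h=1}^n k_h = n$. I would form $R$ by inserting $2 - k_h$ empty columns of height $h$ into the shape of $S$ for each $h$, placing them so the full sequence of column heights remains weakly decreasing (for instance, immediately to the right of any existing columns of the same height, or at the unique admissible slot if $k_h = 0$). The resulting shape then has exactly two columns of each height $h \in \{1, \ldots, n\}$, i.e.\ it is $\ds$.

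The verification that the resulting $R$ lies in $\rd$ and satisfies $\Proj(R) = S$ is then essentially immediate: the inherited rook configuration has one rook per row (same as $S$) and at most one rook per column (the inserted columns are empty), so $R \in \rd$; and since every column of $S$ carries a rook, the empty columns of $R$ are precisely those just inserted, so removing them recovers $S$. There is no serious obstacle in this argument; the only point needing mild care is making the insertion recipe unambiguous, and this is easy once one notes that different choices when $k_h = 1$ simply produce different preimages of the same $S$ (so $\Proj$ is many-to-one in general, but this is irrelevant for the image statement).
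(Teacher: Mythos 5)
Your proof is correct and follows essentially the same route as the paper: the forward inclusion is the observation that column heights are preserved under deletion so surviving columns of a single block of $\ds$ form the blocks of $\Proj(R)$, and the reverse inclusion constructs an explicit preimage, your ``insert $2-k_h$ empty columns of height $h$'' recipe being exactly the paper's block-by-block construction of a (left- or right-)aligned preimage viewed from the other direction. The only cosmetic difference is that the paper fixes the left-aligned choice to get a canonical preimage, which it then reuses to count the fibres, whereas you correctly note the choice is irrelevant for the image statement.
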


\begin{proof} Given $R\in \rd$, notice that all blocks of $\Proj(R)$ are of size at most $2$. 

Conversely, let $S\in \ro$ be a rook placement in which the blocks are of size at most $2$. It is not difficult to see that there is a unique left-aligned rook placement $R$ on $\ds$ such that $\Proj(R) = S$. In fact, $R$ can be constructed as follows:
\begin{itemize}
\item[\textup{(a)}] if $S$ does not contain a block of height $i$, then the block of height $i$ in $R$ has no rook in it,
\item[\textup{(b)}] if $S$ contains a block of height $i$ of size $1$, then the left column of the block of height $i$ in $R$ is a copy of the block in $S$ and no rook is inserted in the right column,
\item[\textup{(c)}] if $S$ contains a block of height $i$ of size $2$, then the block of height $i$ in $R$ is a copy of the block in $S$.
\end{itemize}

Thus, the image of $\Proj$ is exactly the set of all rook placements $S\in\ro$ such that each block is of size at most $2$.
\end{proof}

Notice that in the proof of Proposition~\ref{prop.projimage}, we could have constructed a right-aligned rook placement instead of a left aligned-rook placement by interchanging left and right in case~\textup{(b)}.

Also, for $S\in\Proj(\rd)$ let $\bso(S)$ denote the number of blocks of size $1$ in $S$. Then it is clear that the fibre $\Proj^{-1}(S)$ has cardinality $2^{\bso(S)}$. This is because one can construct a rook placement $R\in \rd$ such that $\Proj(R) = S$ by following the cases in the proof of Proposition~\ref{prop.projimage}, except in case~\textup{(b)} one can choose to insert the copy of the block of size $1$ either as the left column or the right column in $R$.

As the cardinality of $\rd$ is $(n+1)!$ (\cite[Proposition~7.2]{stamtab2017}), we get that
\begin{equation}
\sum_{S\in \Proj(\rd)} 2^{\bso(S)} = (n+1)!.
\end{equation}

We can replace $2$ with $x$ in the left hand side expression to obtain:
\begin{equation}
\sum_{S\in \Proj(\rd)} x^{\bso(S)} = \sum_{R\in \rdl} x^{\bor(R)} = \sum_{R\in \rdr} x^{\bor(R)} = \p.
\label{eq.pndef}
\end{equation}

The first few polynomials $p_n$ are given in Table~\ref{tab.p_n}. We observe that the coefficients of $\p$ seem to be the same as the triangle of numbers \cite[A101280]{Oeis} which are the $\gamma$-vectors of the Eulerian numbers (see~\cite[Chapter~4]{Petersen2015}). This leads us to Theorem~\ref{thm.mainapp}.

\begin{table}[h!]
\centering
\caption{First few $p_n$.}\label{tab.p_n}
\begin{tabular}{c | l } 
 \hline
 $n$ & $p_n(x)$  \\ [0.5ex] 
 \hline\hline
 $1$ & $x$  \\ 
 $2$ & $x^2 + 2$  \\
 $3$ & $x^3 + 8x$ \\
 $4$ & $x^4 + 22x^2 + 16$ \\
 $5$ & $x^5 + 52x^3 + 136x$ \\ 
 $6$ & $x^6 + 114x^4 + 720x^2 + 272$ \\
 $7$ & $x^7 + 240x^5 + 3072x^3 + 3968x$\\
 $8$ & $x^8 + 494x^6 + 11616x^4 + 34304x^2 + 7936$ \\
 $9$ & $x^9 + 1004x^7 + 40776x^5 + 230144x^3 + 176896x$ \\
 $10$ & $x^{10} + 2026x^8 + 136384x^6 + 1328336x^4 + 2265344x^2 + 353792$ \\
 $11$ & $x^{11} + 4072x^9 + 441568x^7 + 6949952x^5 + 21953408x^3 + 11184128$\\[1ex] 
 \hline
\end{tabular}
\end{table}

Another class of important objects in this paper are increasing binary trees. Let $[n]\coloneqq \{1,\ldots,n\}$. An \emph{increasing binary tree} on $n$ vertices is a rooted tree with $n$ vertices each having a unique label from the set $[n]$ such that each vertex has one or zero left child and one or zero right child, and the label of each vertex is smaller than the label of its parent. Let $\Tn$ denote the set of all increasing binary trees on $n$ vertices.

\begin{eg} Figure~\ref{fig.inctree} is an increasing binary tree on $6$ vertices.
\end{eg}

\begin{figure}
 \centering 
 \scalebox{0.8}{
 \begin{forest}for tree={circle,draw, l sep=20pt,calign=fixed angles}[1,black,edge = black,[5],[2,[,phantom],[3,[4,[6],[,phantom]],[,phantom]]]]\end{forest} }
\caption{An increasing binary tree on $6$ vertices.}
\label{fig.inctree}
\end{figure}
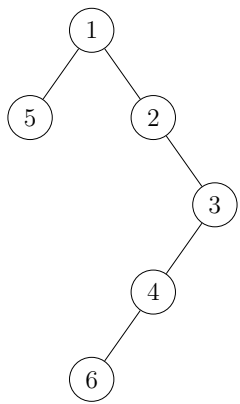

Also if $\s=\s_1\ldots \s_n \in \S$ is a permutation on $n$ letters, then under the assumption that $\s_0 = \s_{n+1} = 0$, any $i\in[n]$ can be one of the following:
\begin{itemize}
\item $i$ is said to be a \emph{valley} if $\s_{i-1}>\s_i$ and $\s_i<\s_{i+1}$,
\item $i$ is said to be a \emph{double fall} if $\s_{i-1}>\s_i>\s_{i+1}$,
\item $i$ is said to be a \emph{double rise} if $\s_{i-1}<\s_i<\s_{i+1}$,
\item $i$ is said to be a \emph{peak} if $\s_{i-1}<\s_i$ and $\s_i>\s_{i+1}$.
\end{itemize}

In \cite[Section~1.5, pp.~44-45]{EC1}, Stanley describes a well known bijection between $\S$ and $\Tn$ such that given any $\s\in \S $, the bijection gives an increasing binary tree $T(\s)$ on $n$ vertices in which for each $i\in[n]$ there is a correspondence between properties of position $i$ in $\s$ and vertex $i$ in $T(\s)$ as given in Table~\ref{tab.propcorrespondence}.

\begin{eg} For $\s = 512643$, $T(\s)$ is the increasing binary tree in Figure~\ref{fig.inctree}.

\end{eg}

\begin{table}
\begin{center}
\begin{tabular}{l|l}
\hline
Position $i$ in $w$, where $\s\in S$  & Vertex $i$ in $T(\s)$ where $T(\s)\in \Tn$ \\
is a permutation on $n$ letters & is an increasing binary tree on $n$ vertices\\
\hline
\hline
-- valley & -- has two children\\
-- peak & -- has no children\\
-- double fall & -- has a left child but no right child\\
-- double rise & -- has a right child but no left child\\
\hline
\end{tabular} 
\end{center}
\caption{Correspondence between properties of $i\in [n]$ in a permutation $\s\in \S$ and the increasing binary tree obtained via the bijection $\s\to T(\s)$ in \cite[Section~1.5, pp.~44-45]{EC1}.}
\label{tab.propcorrespondence}
\end{table}

Also, let us now define the Eulerian polynomials and their $\gamma$-vectors.

\begin{defi} Given a permutation $\s=\s_1\ldots \s_n \in \S$, an index $i\in[n-1]$ is said to be a \emph{descent} if $\s_i>\s_{i+1}$, i.e., $i$ is a double fall or a peak; $i$ is said to be an \emph{ascent} if $\s_i<\s_{i+1}$ if $i$ is either a double rise or a valley. Let $\mathrm{des}(\s)$ denote the number of descents of $w$ and let $\mathrm{asc}(\s)$ denote the number ascents of $\s$. 
\label{defi.descentascent}
\end{defi}

\begin{defi} The $n^{\mathrm{th}}$ Eulerian polynomial is defined as 
\[S_n(x) \coloneqq \sum_{\s\in \S} x^{\mathrm{des}(\s)}.\]
\label{defi.eulerian}
\end{defi}

It is not difficult to see that the number descents of a permutation is equal to the number of ascents of its reversal. Thus, we also have that,
\[S_n(x) = \sum_{\s\in \S} x^{\mathrm{asc}(\s)}.\]

Notice that the $(n+1)^{\mathrm{st}}$ Eulerian polynomial is of degree $n$ and hence, $S_{n+1}(x) \in \mathbb{C}\langle 1,x,\ldots,x^n\rangle$, the vector space over $\mathbb{C}$ consisting of polynomials of degree less than or equal to $n$. If we write $S_{n+1}(x)$ in the basis $\{(x+1)^n,x(x+1)^{n-2},x^2(x+1)^{n-4},\ldots\}$ then we obtain coefficients $\gamma_{n,0},\gamma_{n,1},\gamma_{n,2},\ldots$ such that
\begin{equation}
S_{n+1}(x) = \gamma_{n,0} (x+1)^n + \gamma_{n,1} x(x+1)^{n-2} + \gamma_{n,3} x^{2}(x+1)^{n-4}+\ldots.
\label{eq.gammavecdef}
\end{equation}

Foata and Sch\"utzenberger \cite{Foata1970} showed that the Eulerian polynomials are $\gamma$-positive, i.e., the coefficients $\gamma_{n,0},\gamma_{n,1},\gamma_{n,2},\ldots$ are all non-negative. Furthermore, they also showed that $\gamma_{n,k}$ is the number of permutations on $n+1$ letters with $k$ descents such that every descent is a peak which is the same as the number of permutations on $n+1$ letters with $k$ ascents such that every ascent is a valley. See \cite[Chapter~4]{Petersen2015} for more on $\gamma$-vectors of Eulerian polynomials.

\section{Proof of Main Results}\label{sec.proofs}

\subsection{The Bijection and Proof of Theorem~\ref{thm.mainbij}}\label{subsec.mainbij}

Throughout this section, the rows and columns of $2\delta_n$ are numbered from top to bottom and from left to right when drawn in French notation, i.e., the first row has $2$ cells, the second row has $4$ cells and so on, and the first and second columns have $n$ cells, the third and fourth columns have $n-1$ cells and so on. 

Let $R\in \rd$ be a rook placement on the double staircase $2\delta_n$. We construct a rooted binary tree $\P(R)$ on $n+1$ vertices as follows:

\begin{itemize}

\item The root gets label $1$.

\item If the left column (right column) of the $i^{\text{th}}$ block has a rook in its $j^{\text{th}}$ row, the vertex $i$ in the tree gets a left child (right child resp.) and it is labelled $j+1$.

\item If the left column (right column) of the $i^{\text{th}}$ block has no rook then the vertex $i$ in the tree gets no left child (right child resp.). 

\end{itemize}

Now we show that $\P$ gives us increasing binary trees and that this map is a bijection.

\begin{lem} Given $R\in \rd$, $\P(R)$ is an increasing binary tree.

\end{lem}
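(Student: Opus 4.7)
The plan is to verify that $\P(R)$ satisfies all the defining properties of an increasing binary tree: its vertex set carries the labels $\{1,\ldots,n+1\}$ each exactly once, the underlying graph is a tree, labels strictly increase from root to leaf, and each vertex has at most one left child and one right child.

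First I would pin down the label set. By construction the root is labeled $1$, and every rook appearing in row $j$ introduces a (non-root) vertex labeled $j+1$. Since a rook placement on $\ds$ has exactly one rook in each of the $n$ rows, the rook-induced labels form precisely $\{2,3,\ldots,n+1\}$, each arising exactly once. Together with the root, $\P(R)$ has exactly $n+1$ vertices carrying distinct labels from $\{1,\ldots,n+1\}$.

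The key observation, which I expect to be the heart of the argument, is purely geometric: because the $i^{\text{th}}$ block of $\ds$ consists of cells in rows $i,i+1,\ldots,n$ (the first and second columns have $n$ rows, the third and fourth columns have $n-1$ rows, and so on, when indexed from the top), any rook lying in block $i$ must occupy some row $j \geq i$. Consequently the child that this rook attaches to vertex $i$ carries a label $j+1 \geq i+1 > i$, giving the increasing property immediately.

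Once this is established, the remaining items are routine. Each non-root label $k\in\{2,\ldots,n+1\}$ is produced by the unique rook in row $k-1$, so vertex $k$ has exactly one parent; hence the graph has $n+1$ vertices and $n$ edges. The strict-increase property along edges forbids cycles, and following parent pointers from any vertex strictly decreases the label until it must reach $1$, so $\P(R)$ is connected and therefore a tree. The binary structure is automatic: within the $i^{\text{th}}$ block the left (respectively right) column contains at most one rook, so vertex $i$ acquires at most one left child and at most one right child. Combining these observations gives $\P(R)\in\T$, which is precisely the claim.
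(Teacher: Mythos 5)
Your proof is correct and takes essentially the same approach as the paper's: the binary structure comes from having at most one rook per column, the increasing property from the geometric fact that block $i$ occupies rows $i$ through $n$ (so any child of vertex $i$ has label at least $i+1$), and the single-tree structure from each row containing exactly one rook. You spell out the connectedness step (parent pointers strictly decrease labels, hence every vertex reaches the root $1$) slightly more explicitly than the paper, which instead notes that a forest in which every vertex except $1$ has a parent is a tree, but the arguments are the same in substance.
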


\begin{proof}  According to the construction of $\P(R)$, each vertex $i$ has a set of left children and a set of right children; but, as there is at most one rook in each column of $R$, $i$ has zero or one left child and zero or one right child. Thus, $\P(R)$ is a forest of binary trees.
If $j$ is a child of $i$ in $\P(R)$, then $i+1\leq j\leq n+1$ as the $i^{\text{th}}$ block in $R$ only has rows $i$ to $n$, and hence, $i<j$. Thus, $\P(R)$ is a forest of increasing binary trees.
Finally, as the row $j$ of $T$ has exactly one rook which is in the $i^{\text{th}}$ block, all vertices in $\{2,\ldots,n+1\}$ have a parent. Thus, $\P(R)$ is an increasing binary tree.
\end{proof}

\begin{proof}[Proof of Theorem~\ref{thm.mainbij}] We show that the map $\P$ is a bijection that satisfies the required properties.

Let $T\in \T$ be an increasing binary tree on $n+1$ vertices. We can construct $\Psi_n(T)$, a rook placement on $\ds$ as follows:

\begin{itemize}

\item If $j+1$ is the left child of $i$ in $T$, we put a rook in the $j^{\text{th}}$ row of the left column of the $i^{\text{th}}$ block.

\item If $j+1$ is the right child of $i$ in $T$, we put a rook in the $j^{\text{th}}$ row of the right column of the $i^{\text{th}}$ block.

\end{itemize}

As each vertex $j+1$ has a parent in $T$, each row $j$ in $\Psi_n(T)$ will have exactly one rook. As each parent $i$ in $F$ has at most one left child and at most one right child, each column of $\Psi_n(T)$ will have at most one rook. Thus, $\Psi_n(T)$ is a rook placement on $2\delta_n$.

From the construction, it is clear that $\P$ and $\Psi_n$ are inverses of each other. Also, notice that vertex $i$ in $T$ has a left child if and only if the left column in the $i^{\text{th}}$ block of $\Psi_n(T)$ has a rook. Similarly, vertex $i$ in $T$ has a right child if and only if the right column in the $i^{\text{th}}$ block of $\Psi_n(T)$ has a rook. Thus, the correspondences between properties of $i$ hold.
\end{proof}

\begin{eg} If $R$ is the rook placement in Figure~\ref{fig.projrook}(a), then the increasing binary tree $\Phi_5(R)$ is the tree drawn in Figure~\ref{fig.inctree}.

\end{eg}

\subsection{Proof of Theorem~\ref{thm.mainapp}}

\begin{lem} Let $R\in \rd$ be a rook placement on $\ds$. The number of empty blocks of $R$ is the same as the number of blocks of $R$ with exactly $2$ rooks in them.

\end{lem}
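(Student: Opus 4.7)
The cleanest route is a direct double-counting argument. Let $a$, $b$, $c$ denote the number of blocks of $R$ containing $0$, $1$, and $2$ rooks respectively. Since $\ds$ has exactly $n$ blocks, I would first record
\[a + b + c = n.\]
Next, since $R \in \rd$ places exactly one rook in each of the $n$ rows, the total number of rooks in $R$ equals $n$, and counting the same total block by block gives
\[b + 2c = n.\]
Subtracting the second identity from the first yields $a = c$, which is precisely the claim.

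As a sanity check (and as a second proof that fits the theme of the paper), I would verify the statement through the bijection $\P$ of Theorem~\ref{thm.mainbij}. Under that correspondence, empty blocks of $R$ correspond to vertices $i \in \{1,\ldots,n\}$ of $\P(R)$ with no children, while blocks with two rooks correspond to vertices $i \in \{1,\ldots,n\}$ with two children. The vertex $n+1$ in $\P(R)$ is automatically a leaf (its label is maximal), and it does not correspond to any block. The standard binary-tree identity $\#\{\text{leaves}\} = \#\{\text{vertices with two children}\} + 1$ applied to $\P(R)$ then gives
\[(\text{empty blocks of }R) + 1 = (\text{blocks of }R \text{ with two rooks}) + 1,\]
again yielding equality.

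There is no real obstacle here; the only thing to be slightly careful about is that the count $b + 2c = n$ relies on the definition of $\rd$ requiring each row (rather than each column) to contain exactly one rook, so the total rook count is determined and equals the number of blocks. I would include this one-line remark for clarity and then present the direct counting proof above as the main argument.
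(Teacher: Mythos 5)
Your main argument is correct and is essentially the paper's own proof: the paper likewise double-counts, letting $i$ be the number of one-rook blocks and using the two facts that $\ds$ has $n$ blocks and that $R$ has exactly $n$ rooks (one per row) to conclude that the empty blocks and the two-rook blocks both number $(n-i)/2$; your system $a+b+c=n$, $b+2c=n$ with subtraction is just a tidier packaging of the same count, including the same remark that the rook total comes from the one-rook-per-row condition. Your secondary, bijective argument is a genuinely different route that the paper does not take, and it is also valid: the one delicate point --- that vertex $n+1$ of $\P(R)$ is forced to be a leaf (its label is maximal) and corresponds to no block, so the classical identity $\#\{\text{leaves}\}=\#\{\text{vertices with two children}\}+1$ yields the claim with the extra $1$ appearing on both sides --- is exactly the point you handle. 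Since this lemma appears after Theorem~\ref{thm.mainbij} has been proved, there is no circularity in invoking the bijection; what the direct count buys is independence from the bijection machinery, while the tree identity explains ``why'' the statement holds structurally. Either proof would be acceptable; your choice to lead with the elementary count matches the paper.
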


\begin{proof}
Let $i$ be the number of blocks of $R$ with exactly $1$ rook. There are $n-i$ rooks left to be placed and as each of the remaining blocks with rooks in them have $2$ rooks, there are $(n-i)/2$ with $2$ rooks. All the remaining blocks have no rooks in them and as there are $n$ blocks, we get that there are $n-(n-i)/2 - i=(n-i)/2$ blocks with no rooks  in them.
\end{proof}

\begin{proof}[Proof of Theorem~\ref{thm.mainapp}] 
\begin{itemize}
\item[\textup{(a)}] 
Let $\p = a_{n,0}x^n + a_{n,1}x^{n-2}+\ldots$. From definition of $\p$, we know that $a_{n,k}$ is the number of left-aligned rook placements $R\in \rdl$ such that $R$ has $n-2k$ blocks with exactly $1$ rook, and  $S\in\rdr$ such that $S$ has $n-2k$ blocks with exactly $1$ rook. Notice that $R$ has exactly $k$ blocks with no rooks and $S$ has exactly $k$ blocks with $2$ rooks. 

From Theorem~\ref{thm.mainbij} we get that $a_{n,k}$ is equal to the number of increasing binary trees on $n+1$ vertices in which no vertex has a right child without having a left child, and there are exactly $k$ vertices with no children. Similarly, we also get that $a_{n,k}$ is equal to the number of increasing binary trees in which no vertex has a left child without having a right child, and there are exactly $k$ vertices with $2$ children. Thus, from Table~\ref{tab.propcorrespondence} we get that $a_{n,k}$ is the number of permutations on $n+1$ letters in which there are no double falls and $k$ peaks, i.e., each descent is a peak and there are $k$ descents. Similarly, we also get that $a_{n,k}$ is the number of permutations on $n+1$ letters in which there are no double rises and $k$ valleys, i.e., each ascent is a valley and there are $k$ ascents.

\item[\textup{(b)}] Notice that
\begin{equation}
x^{n/2}p_n(\dfrac{x+1}{\sqrt{x}}) = a_{n,0} (x+1)^n + a_{n,1} x(x+1)^{n-2} + a_{n,3} x^{2}(x+1)^{n-4}+\ldots.
\label{eq.pntransformcoefficients}
\end{equation}

In part~(a), we have established that $a_{n,k} = \gamma_{n,k}$ where $(\gamma_{n,0},\gamma_{n,1},\ldots)$ is the $\gamma$-vector of the Eulerian polynomial $S_{n+1}(x)$ as described in Equation~\eqref{eq.gammavecdef}. Thus, by comparing Equations~\eqref{eq.gammavecdef} and~\eqref{eq.pntransformcoefficients}, we get the desired result.
\end{itemize}
\end{proof}

\section{Further Comments}\label{sec.comments}

\subsection{Another Bijective Proof of Theorem~\ref{thm.mainapp} via Chains of Dyck Shapes}

In the previous sections, we constructed a bijection between rook placements on double staircases and permutations via increasing binary trees. In this section, we shall construct a bijection via a different intermediate object called chains of Dyck shapes, which we then use to prove Theorem~\ref{thm.mainapp}.

\subsubsection{Chains of Dyck Shapes and Rook Placements on Double Staircases}

Chains of Dyck shapes were introduced by Josuat-Verg\'es in~\cite[Definition~3.7]{stamtab2017} where a bijection to \emph{subdivided Laguerre histories} was also constructed \cite[Section~6]{stamtab2017}. However, our approach will be slightly different and our motivation comes from \emph{large Laguerre histories} of Fran\c{c}on-Viennot \cite{Francon1979} (also see \cite[Chapter~4b]{ViennotIMSc2016}).

Before proceeding further, let us set some notation on Dyck paths and words that encode Dyck paths.

\begin{defi} A \emph{Dyck word} of size $n$ is a word $w$ of length $2n$ from the alphabet $\{\ne,\se\}$ such that
\begin{itemize}
\item the number of occurrences of $\ne$ is equal to the number of occurrences of $\se$ .
\item for each prefix of $w$, the number of occurrences of $\ne$ is greater than the number of occurrences of $\se$.
\end{itemize}

\end{defi}

A \emph{Dyck path} is a diagrammatic representation of a Dyck word; it is a path on the positive quadrant which begins at $(0,0)$ and ends at $(2n,0)$, such that whenever $\ne$ occurs we take a $(1,1)$ step, and whenever $\se$ occurs we take a $(1,-1)$ step.

Given $R\in \rd$, a rook placement on $\ds$, we define $d(R)$ to be the word of length $2n+2$ such that 
\begin{itemize}
\item the first letter is $\ne$, the last letter is $\se$,
\item if $2\leq i\leq 2n+1$, the $i^{\mathrm{th}}$ letter is $\ne $ if the $(i-1)^{\mathrm{st}}$ column contains  a rook and $\se$ otherwise.
\end{itemize}
Josuat-Verg\'es showed that $d(R)$ is a Dyck word (\cite[Lemma~3.2]{stamtab2017}).

\begin{figure}\centering
\begin{tikzpicture}[scale = 0.75]
\draw[gray,dotted] (-1,-1) grid (11,3);
\draw[rounded corners=1, color=black, line width=2] (-1,-1)-- (0,0)--(2,2)--(3,1)--(5,3)--(6,2)--(7,3)--(10,0)--(11,-1);
\end{tikzpicture}\caption{Dyck path for the Dyck word $\ne\ne\ne\se\ne\ne\se\ne\se\se\se\se$.}\label{fig.dyckpath}
\end{figure}

\sloppy \begin{eg} If $R$ is the rook placement in Figure~\ref{fig.projrook}(a) then the Dyck path $d(R)$ is drawn in Figure~\ref{fig.dyckpath} and it is given by the Dyck word $\ne\ne\ne\se\ne\ne\se\ne\se\se\se\se$.
\end{eg}

Let $\dn$ denote the partition $(n,n-1,\ldots,1)$. (We call it a single staircase.)
\begin{defi} A \emph{Dyck shape} of size $n$ is a skew shape $\dn /\l$ such that $\l \leq \delta_{n-1}$. 

Here $\leq$ denotes the partial order on the Young lattice, i.e., for partitions $\l$ and $\mu$, $\l\leq \mu$ if the Young diagram of $\l$ is contained in the Young diagram of $\mu$.

\end{defi} 

Dyck shapes are in bijection with Dyck paths which is clearer when they are drawn in Japanese notation ($135^{\circ}$ clockwise rotation to the French notation).

\begin{eg} Figure~\ref{fig.dyckshape} is the diagrammatic representation of the Dyck shape $\delta_6/(3,1)$ and it corresponds to the Dyck word $\ne\ne\ne\se\ne\ne\se\ne\se\se\se\se$.

\end{eg}

\begin{figure}
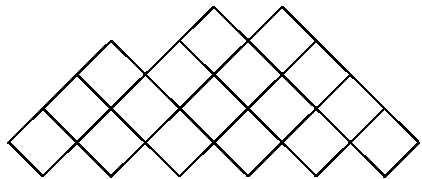

\centering
\begin{turn}{-135}
\begin{ytableau}
\n\\
\n & \n \\
\n & \n & \n\\
\n & \n & \n & \n \\
\none & \n & \n & \n & \n\\
\none & \none & \none & \n & \n & \n\\
\end{ytableau}
\end{turn}
\vspace{-5em}
\caption{Dyck shape $\delta_6/(3,1)$.}\label{fig.dyckshape}
\end{figure}

\begin{defi} A skew shape is called a \emph{ribbon} if its Young diagram is connected and contains no $2\times 2$ square. 

Let $D$ and $E$ be two Dyck shapes of size $n$ and $n+1$, respectively. We draw the skew diagrams of $D$ and $E$ in Japanese notation such that the leftmost cells of $D,E$ coincide. If $D$ is contained inside $E$, and the difference $E/D$ is a ribbon then we write $D\sqsubset E$.

\end{defi}

\begin{eg} The Dyck shape~$D = \delta_5/(2)$ and the Dyck shape~$E = \delta_6/(3,1)$ are such that~$D\sqsubset E$ as illustrated in Figure~\ref{fig.dyckribbon}.
\end{eg}

\begin{rem}\label{rem.sqsubset} Notice that this definition is easily translated in terms of binary words over~$\ne$ and~$\se$. Let~$D$ be a Dyck word, then~$D \sqsubset E$ if and only if~$E$ is obtained from~$D\se\se$ by changing a~$\se$ into a~$\ne$ (and
each step~$\se$ of~$D\se\se$ can be changed except the last one, so that there are~$n + 1$ possibilities for a path of length~$2n$).
\end{rem}

\begin{figure}
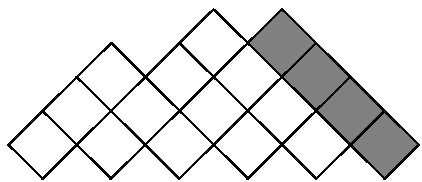

\centering
\begin{turn}{-135}
\begin{ytableau}
*(gray)\n\\
*(gray)\n & \n \\
*(gray)\n & \n & \n\\
*(gray)\n & \n & \n & \n \\
\none & \n & \n & \n & \n\\
\none & \none & \none & \n & \n & \n\\
\end{ytableau}
\end{turn}
\vspace{-2cm}
\caption{Here $D\sqsubset E$ where $D = \delta_5/(2)$ and $E = \delta_6/(3,1)$. Here the gray cells denote the ribbon.}\label{fig.dyckribbon}
\end{figure}

\begin{defi} An \emph{$n$-chain of Dyck shapes} is a sequence $D_1\sqsubset D_2 \sqsubset \ldots \sqsubset D_n$ such that $D_i$ is a Dyck shape of size $i$. The biggest path $D_n$ is called the shape of the chain and we say that the chain ends at $D_n$.

\end{defi}

\begin{eg} The rook placement~$R$ in Figure~\ref{fig.projrook}~(a) corresponds to the chain of Dyck shapes in Figure~\ref{fig.dyckchain}.
\end{eg}

\begin{figure}
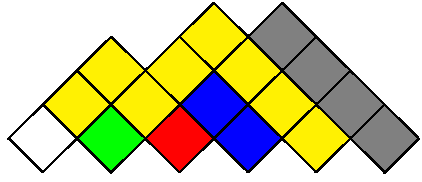

\centering
\begin{turn}{-135}
\begin{ytableau}
*(gray)\n\\
*(gray)\n & *(yellow)\n \\
*(gray)\n & *(yellow)\n & *(blue)\n\\
*(gray)\n & *(yellow)\n & *(blue)\n & *(red)\n \\
\none & *(yellow)\n & *(yellow)\n & *(yellow)\n & *(green)\n\\
\none & \none & \none & *(yellow)\n & *(yellow)\n & \n\\
\end{ytableau}
\end{turn}
\vspace{-2cm}
\caption{The rook placement $R$ in Figure \ref{fig.projrook}(a) corresponds to this chain of Dyck shapes. Here the coloured cells denote the different ribbons.}\label{fig.dyckchain}
\end{figure}

We use $\D_n$ to denote the set of $n$-chains of Dyck shapes. Josuat-Verg\'es showed that $|\D_{n+1}| = (n+1)!$ by constructing a bijection between $D_{n+1}$ and $\rd$ (\cite[Proposition~3.8]{stamtab2017}). We shall state this bijection now without proof.

\begin{thm}{\cite[Proposition~3.8]{stamtab2017}} Let $R\in \rd$, a rook placement on $\ds$. Let $R_i$ be the rook placement in $2\delta_i$ obtained by keeping only the top $i$ rows. Then 
\begin{itemize}
\item[(a)] $(d(R_0), \ldots, d(R_n))$ corresponds to an $(n+1)$-chain of Dyck shapes. Here $R_0$ is the unique empty rook placement and $d(R_0) = \ne \se$. 

\item[(b)] The map $R\to (d(R_0),\ldots,d(R_n))$ defines a bijection between rook placements on $\ds$ and $(n+1)$-chains of Dyck shapes.
\end{itemize}
\label{thm.bijchainrook}
\end{thm}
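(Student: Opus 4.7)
The plan is to establish parts~(a) and~(b) in sequence, using Remark~\ref{rem.sqsubset} as the bridge between the skew-shape relation $\sqsubset$ and a word-level statement. For part~(a), fix $i \in \{0, \ldots, n-1\}$ and let $c$ denote the column of the unique rook lying in row $i+1$ of $R$, so $1 \le c \le 2(i+1)$. Comparing $R_i$ and $R_{i+1}$ column by column, they agree on columns $1, \ldots, 2i$ except that column $c$ acquires a rook when $c \le 2i$; moreover the two new columns $2i+1$ and $2i+2$ of $R_{i+1}$ contain a rook only if $c$ equals $2i+1$ or $2i+2$, respectively. Under the encoding in which column $j$ corresponds to position $j+1$ of the Dyck word, a three-case analysis on $c \le 2i$ versus $c \in \{2i+1, 2i+2\}$ shows that $d(R_{i+1})$ differs from $d(R_i)\se\se$ at the single position $c+1$, where a $\se$ becomes a $\ne$; since $c+1 \le 2i+3 < 2i+4$, this position is never the last one, so by Remark~\ref{rem.sqsubset} one has $d(R_i) \sqsubset d(R_{i+1})$.

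For part~(b), the analysis of~(a) furnishes an explicit inverse. Given a chain $D_0 \sqsubset D_1 \sqsubset \cdots \sqsubset D_n$, for each $i \in \{1, \ldots, n\}$ let $p_i$ denote the unique non-final position at which $D_{i-1}\se\se$ and $D_i$ disagree (well-defined by Remark~\ref{rem.sqsubset}), and place a rook in row $i$, column $p_i - 1$. To see that this produces an element of $\rd$, note that each $\sqsubset$-step only turns a $\se$ into a $\ne$, so once a position carries a $\ne$ in some $D_j$ it continues to do so in every later $D_k$; in particular $D_{i-1}$ already carries $\ne$ at each of $p_1, \ldots, p_{i-1}$, forcing $p_i$ to be distinct from them, so no column receives two rooks. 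That this inverts the map of~(a) is immediate from part~(a): in each step the position flipped is exactly $c+1$, where $c$ is the column of the new rook.

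The main obstacle is the bookkeeping in part~(a)---aligning columns in $R$ with positions in the word, and checking that all three cases for $c$ yield exactly one modified position. A shorter but less structural alternative for~(b) would be to combine the injectivity falling out of the reconstruction with the known count $|\rd| = (n+1)!$ from \cite[Proposition~7.2]{stamtab2017} and an independent count of $|\D_{n+1}|$; I prefer the explicit inverse since it falls out of part~(a) with essentially no extra work.
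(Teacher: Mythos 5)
Your proposal is correct, but there is nothing in the paper to compare it against: the paper states Theorem~\ref{thm.bijchainrook} explicitly \emph{without proof}, quoting it as \cite[Proposition~3.8]{stamtab2017}, so your argument is by necessity an independent one, and it does go through. In part~(a) the bookkeeping is right: with column $j$ encoded at position $j+1$ of the word, $d(R_{i+1})$ differs from $d(R_i)\se\se$ exactly at position $c+1$, where $c\le 2i+2$ is the column of the rook in row $i+1$, and since $c+1\le 2i+3$ this is never the final position, so Remark~\ref{rem.sqsubset} yields $d(R_i)\sqsubset d(R_{i+1})$. Two steps you use silently are worth a sentence each: the claim that column $c$ \emph{acquires} a rook (i.e., was empty in $R_i$) rests on $R$ having at most one rook per column, and invoking Remark~\ref{rem.sqsubset} presumes each $d(R_i)$ is a Dyck word, which the paper imports from \cite[Lemma~3.2]{stamtab2017} (alternatively, it follows by induction from your own flip description, since changing a non-final $\se$ of $D\se\se$ to $\ne$ preserves Dyck-ness). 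In part~(b) your monotonicity observation --- positions carrying $\ne$ persist along the chain, hence the flip positions $p_1,\ldots,p_n$ are pairwise distinct and no column receives two rooks --- is exactly the needed point, and the terse mutual-inverse claim is sound because a chain is determined by $D_0=\ne\se$ together with its sequence of flip positions, a placement in $\rd$ is determined by the column of the rook in each row, and your correspondence between the two data is $p_i=c_i+1$.
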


\subsubsection{Permutations and Chains of Dyck Shapes}

Let us define the large profile of a permutation. Let $\s \in \S$ and let $\s_0 = \s_{n+1} = 0$. We define the function $\f:[n-1]\to \{\ne,\se\}$ as follows: 
\begin{itemize}
\item if $i$ is a valley then $\f(i) = \ne \ne$,
\item if $i$ is a peak then $\f(i) = \se \se$,
\item if $i$ is a  double rise then $\f(i) = \se \ne$,
\item if $i$ is a double fall then $\f(i) = \ne \se$.
\end{itemize}

Let $\prof$ be the word $\prof = \ne \f(1)\cdots\f(n-1)\se$. We can show that this word is a Dyck word. If the values of $\f$ were instead $\ne\se$ for a double rise, and $\se\ne$ for a double fall, then the Dyck word obtained would give the Dyck path given in \cite[Section~3]{Francon1979}. We call it the \emph{profile} or \emph{large Laguerre profile} of $\s$.

\begin{eg} Let $\s = 512643$, then $1$ is a valley; $5,6$ are peaks; $2$ is a double rise; and $3,4$ are double falls. Thus, $\prof = \ne \ne\ne \se\ne \ne\se \ne\se \se\se \se$, the corresponding Dyck path is the one in Figure~\ref{fig.dyckpath}, and the corresponding Dyck shape is the one in Figure~\ref{fig.dyckshape}.\end{eg}

For $\s\in\S$, let $\s'$ be the permutation obtained by removing the letter $n$ from $\s$ in one-line notation. Let $\s^{(0)} = \s$ and inductively define $\s^{(i+1)} = (\s^{(i)})'$.

We have the following lemma:
\begin{lem} For $\s\in \SS$, $\profp \sqsubset\prof$. Thus, the tuple of Dyck shapes given by $(\profs{(n)},\ldots,\profs{(0)})$ forms an $(n+1)$-chain of Dyck shapes. 
\label{lem.shapetupleischain}
\end{lem}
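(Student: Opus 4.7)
The plan is to apply Remark~\ref{rem.sqsubset} directly: I will show that $\prof$ is obtained from $\profp\se\se$ by changing exactly one non-final $\se$ into an $\ne$. Let $k$ be the position of $n+1$ in $\s$, and set $a=\s_{k-1}$, $b=\s_{k+1}$ with the boundary convention $\s_0=\s_{n+2}=0$. Removing $n+1$ only alters the local monotonicity pattern at positions $k-1$ and $k+1$, so for every value $v\in[n-1]\setminus\{a,b\}$ we have $\f(v)=\fp(v)$. Hence $\prof$ and $\profp\se\se$ agree outside the (at most) three slots corresponding to the values $a$, $b$, and $n$; note that the slot of $n$ in $\prof$, namely $\f(n)$, is aligned with the trailing $\se\se$ of $\profp\se\se$, because $\fp$ never records the contribution of the maximum element $n$ of $\s'$.

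The main step is a short case analysis according to whether $n$ is adjacent to $n+1$ in $\s$. If $n\notin\{a,b\}$, then $n$ remains a peak in $\s$, so $\f(n)=\se\se$, matching the trailing $\se\se$ of $\profp\se\se$. A direct enumeration of the possible local types then shows that when $a<b$ (including the boundary $a=0$) one has $\f(a)=\fp(a)$, while $\fp(b)$ and $\f(b)$ differ precisely in their first letter, an $\se$ being replaced by an $\ne$ (reflecting the fact that $b$'s left neighbour drops from the dominant $n+1$ down to the smaller $a$); when $a>b$ the roles of $a$ and $b$ are swapped and the single $\se\mapsto\ne$ flip occurs in the second letter of the $a$-block. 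If $a=n$, then $n$ is a double rise in $\s$, so $\f(n)=\se\ne$, which differs from the corresponding $\se\se$ of $\profp\se\se$ by a single $\se\mapsto\ne$ flip in the second letter; the $b$-slot is unchanged because $b$'s new left neighbour $n$ still dominates $b$, just as $n+1$ did. The case $b=n$ is symmetric, with $\f(n)=\ne\se$ and the flip in the first letter of the $n$-slot.

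In every case exactly one non-final $\se$ becomes $\ne$, whence $\profp\sqsubset\prof$ by Remark~\ref{rem.sqsubset}. The ``thus'' assertion then follows by iterating: since $\s^{(i)}\in\mathfrak{S}_{n+1-i}$, its profile $\profs{(i)}$ is a Dyck word of size $n+1-i$, and the chain $\profs{(n)}\sqsubset\profs{(n-1)}\sqsubset\cdots\sqsubset\profs{(0)}$ has the correct sizes to be an $(n+1)$-chain of Dyck shapes. The main obstacle is the bookkeeping in the case $n\notin\{a,b\}$, particularly keeping the boundary cases $a=0$ or $b=0$ consistent with the generic split $a<b$ vs $a>b$; I would organise this in a small table listing, for each of the four local types and each relative order of $a,b$, the corresponding $f$-values, so that the single-letter flip can be read off cleanly.
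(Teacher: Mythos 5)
Your proposal is correct, and it runs on the same engine as the paper's proof: show that $\prof$ is obtained from $\profp\se\se$ by turning exactly one non-final $\se$ into a $\ne$, then invoke Remark~\ref{rem.sqsubset}, the chain statement following by iteration. What differs is the case decomposition, and the difference is worth recording. The paper splits on the \emph{position} of $n+1$ in $\s$ (first letter, last letter, or between two letters), whereas you absorb the boundary positions into the convention $\s_0=\s_{n+2}=0$ and split instead on whether $n$ is adjacent to $n+1$. Your organisation is more careful on two points where the paper is loose. First, you make explicit the alignment fact that the slot of the value $n$ in $\prof$ sits over an $\se\se$ in $\profp\se\se$ precisely because $\fp$ never records the maximum of $\s'$; the paper's cases are phrased in terms of $\fp(i)$ for the neighbour $i$ of $n+1$, an object that literally does not exist when that neighbour is $n$ (the paper's conclusion still holds, but only via the alignment observation you spell out). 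Second, you correctly locate the flip in the block of the \emph{larger} of the two neighbours $a,b$, the smaller one being unchanged; the paper's Case~3 as printed asserts the opposite (it claims $\f(j)=\fp(j)$ for $i<j$ while keying the flip positions $2i$, $1+2i$ to the smaller value), which is an $i\leftrightarrow j$ slip --- one can check on $\s=41532$, where the unchanged neighbour is $1$ and the flip occurs in the block of $3$, that your version is the internally consistent one. What the paper's arrangement buys is that its flip positions, keyed to whether $n+1$ sits to the left or right of the relevant neighbour, are exactly the data reused to invert the map in the proof of Theorem~\ref{thm.bijdyckshape}; your proof records the same information (first versus second letter of the relevant block), so it would serve that purpose equally well.
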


\begin{proof} To prove this lemma, we need to observe how we obtain $\prof$ from $\profp$ when we insert $n+1$  into $\s'$. We have the following cases:

\emph{Case 1}: When $n+1$ is the first letter of $\s$, and let $i$ be the second letter of $\s$, i.e., the first letter of $\s'$. Then, $i$ is either a double rise or a peak in $\s'$ and hence the first letter of $\fp(i)$   is $\se$. In $\s$, $i$ becomes a double fall if it was a peak in $\s'$, and a valley if it was a double rise in $\s'$. Thus, the first letter of $\f(i)$ is $\ne$, and the second letter is the same as the second letter of $\fp(i)$. Thus, $\prof$ is obtained from $\profp\se \se$ by changing the $(2i)^{\mathrm{th}}$ letter from $\se$ to $\ne$.

\emph{Case 2}: When $n+1$ is the last letter of $\s$, and let $i$ be the last letter of $\s'$. It is either a peak or a double fall in $\s'$ and it becomes a double rise or a valley, respectively, in $\s$. In either case, the second letter of $\f(i)$ and $\fp(i)$ are different, while the first letters are equal. Thus, $\prof$ is obtained from $\profp\se\se$ by changing the $(1+2i)^{\mathrm{th}}$ letter from $\se$ to $\ne$.

\emph{Case 3}: When $n+1$ is inserted between $i$ and $j$ where $i<j$. Thus $\f(j) = \fp(j)$. If $n+1$ is to the left of $i$, then $i$ is either a double rise or a peak in $\s'$ and we use arguments similar to those in Case~1 to show that $\prof$ is obtained from $\profp\se\se$ by chainging the $(2i)^{\mathrm{th}}$ letter from $\se$ to $\ne$. Similarly, if $n+1$ is to the right of $i$, then it is either a double fall or a valley in $\s'$ and we can proceed by similar arguments to those in Case~2 to show that $\prof$ can be obtained from $\profp\se\se$ by changing the $(1+2i)^{\mathrm{th}}$ letter from $\se$ to $\ne$.

Thus, using Remark~\ref{rem.sqsubset} in all three cases we get that $\profp \sqsubset\prof$.

\end{proof}

\begin{thm} For $\s\in\SS$, the map $\s\mapsto (\profs{(n)}, \ldots, \profs{(0)})$ forms a bijection between permutations and $(n+1)$-chains of Dyck shapes.
\label{thm.bijdyckshape}
\end{thm}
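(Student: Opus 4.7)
The plan is to combine three ingredients. Lemma~\ref{lem.shapetupleischain} tells us the map is well defined into $\D_{n+1}$; Theorem~\ref{thm.bijchainrook} gives $|\D_{n+1}| = (n+1)! = |\SS|$; so it suffices to establish injectivity, which I would do by exhibiting an explicit inverse that reconstructs $\s$ from its chain of Dyck shapes.

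The inverse is built by reading the chain inward, recovering $\s^{(n)}, \s^{(n-1)}, \ldots, \s^{(0)} = \s$ in turn. The outermost term $\profs{(n)}$ forces $\s^{(n)}$ to be the unique permutation on the single letter $1$, so the base case is pinned down. Having recovered $\s^{(k+1)}$, I would use the containment $\profs{(k+1)} \sqsubset \profs{(k)}$ together with Remark~\ref{rem.sqsubset} to extract a unique position $p$ in the word $\profs{(k+1)}\se\se$ at which a $\se$ becomes a $\ne$. Revisiting the three-case analysis already carried out inside the proof of Lemma~\ref{lem.shapetupleischain}, this position determines the insertion location of the new maximum $n+1-k$ into $\s^{(k+1)}$ via the rule: if $p = 2i$, place $n+1-k$ immediately to the left of the letter $i$; if $p = 2i+1$, immediately to its right. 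Iterating down to $k = 0$ reconstructs $\s$, proving injectivity.

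The step that needs the most care is checking that this rule is both well defined and exhaustive. Remark~\ref{rem.sqsubset} provides exactly $n-k+1$ admissible positions $p$, matching the $n-k+1$ places at which $n+1-k$ can be inserted into the length-$(n-k)$ permutation $\s^{(k+1)}$, so the rule must set up a bijection between the two sets. I would verify this by matching each of Cases~1,~2, and~3 of Lemma~\ref{lem.shapetupleischain} with disjoint families of admissible positions: Case~3 supplies the internal insertion positions indexed by the two parities $p = 2i$ and $p = 2i+1$, while Cases~1 and~2 account for the front and back insertions respectively. Once this accounting is laid out the inverse is unambiguous, and the theorem follows from the cardinality equality above.
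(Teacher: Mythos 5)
Your proposal matches the paper's own proof in all essentials: the paper likewise constructs the inverse map inductively, recovering the permutation from the subchain and then inserting the new maximum according to exactly your rule (difference at position $2i$ means insert to the left of the letter $i$, difference at position $1+2i$ means insert to its right), justified by Remark~\ref{rem.sqsubset} and the case analysis in the proof of Lemma~\ref{lem.shapetupleischain}. The only cosmetic difference is that you invoke the cardinality $|\D_{n+1}|=(n+1)!$ from Theorem~\ref{thm.bijchainrook} to upgrade injectivity to bijectivity, whereas the paper treats the same step-by-step construction directly as a two-sided inverse; both are valid.
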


\sloppy \begin{proof} We inductively construct the reverse bijection for this map. Given $(D_0,\ldots, D_n)$ an $(n+1)$-chain of Dyck shapes, let $\tau$ be the permutation given the $n$-chain $(D_0,\ldots,D_{n-1})$ such that $(D_0,\ldots, D_{n-1}) = \proftlist$.

Thus, we are done if we construct $\s$ such that $\s' = \tau$ and $\prof = D_n$. From Remark~\ref{rem.sqsubset}, we know that $D_n$ is obtained from $D_{n-1}\se\se$ by replacing one of the $\se$ with $\ne$. From the proof of Lemma~\ref{lem.shapetupleischain}, we know that changing one of the $\se$ to $\ne$ corresponds uniquely to each of the $n+1$ different ways of inserting the letter $n+1$ to $\tau$. We can state this explicitly as follows:

\begin{itemize}
\item If $D_{n-1}\se\se$ and $D_n$ differ at the $(2i)^{\mathrm{th}}$ position, $\s$ is obtained by inserting $n+1$ to the left of $i$ in $\tau$.
\item If $D_{n-1}\se\se$ differ at the $(1+2i)^{\mathrm{th}}$ position $\s$ is obtained by inserting $n+1$ to the right of $i$ in $\tau$.
\end{itemize}
\end{proof}

\begin{proof}[Second Proof of Theorem~\ref{thm.mainapp}(a)] From Theorem~\ref{thm.bijdyckshape} and Theorem~\ref{thm.bijchainrook} (\cite[Proposition~3.8]{stamtab2017}) the set $\rd$ is in bijection with $\D_{n+1}$ which is in bijection with $\SS$. If for $R\in \rd$, $\s\in \SS$ is the permutation that $R$ corresponds to under this bijection, then we get that the correspondence in Table~\ref{tab.rookdyckperm} holds.

\begin{table}
\centering
\begin{tabular}{l|c|l}
\hline
Block $i$ in $R$ & Letters $(2i)$ and $(2i+1)$ in $d(R)$ & $i$ in $\s$\\
\hline
\hline
-- rook in both columns & -- $\ne \ne$ & -- valley\\
-- no rooks & -- $\se \se$ &  -- peak\\
-- rook in left column but  & -- $\se \ne$ & -- double rise\\
no rook in right column & & \\
-- rook in right column but & -- $\ne \se$ & -- double fall\\
no rook in left column & & \\
\hline
\end{tabular}
\caption{Correspondence between properties of position $i\in[n]$ in a permutation $\s\in\SS$, block $i$ in a rook placement $R\in \rd$, and letters $f(2i)$ and $f(2i+1)$ of shape of the corresponding to the $(n+1)$-chain of Dyck shapes under bijections $R\to (d(R_0),\ldots,d(R_n))$ and reverse of $\s\mapsto (\profs{(n)}, \ldots, \profs{(0)})$, respectively.}\label{tab.rookdyckperm}
\end{table}

Thus, we get that the left-aligned rook placement on $\ds$ with $\bor(T) = k$ correspond to permutations on $n+1$ letters such that there are $k$ descents and every descent is a peak.

Similarly, we get that the right-aligned rook placements on $\ds$ with $\bor(T) = k$ correspond to permutations on $n+1$ letters such that there are $k$ ascents and every ascent is a valley.
\end{proof}

We have seen several bijections so far. In fact, four of these bijections satisfy the following commutative diagram in Equation~\eqref{eq.commdiag} holds. We can compare the table in Theorem~\ref{thm.mainbij} and Tables~\ref{tab.propcorrespondence} and~\ref{tab.rookdyckperm} to see the correspondence between $i\in[n]$ for the four different objects involved.
\begin{equation}
\begin{tikzcd}
\SS\arrow[rr,rightharpoonup,yshift=0.25ex]\arrow[rr,leftharpoondown,yshift=-0.25ex]\arrow[dd,rightharpoonup,xshift=0.25ex]\arrow[dd,leftharpoondown,xshift=-0.25ex]& & \D_{n+1} \arrow[dd,rightharpoonup,xshift=0.25ex]\arrow[dd,leftharpoondown,xshift=-0.25ex]\\
& \circlearrowleft  &\\[-2ex]
\T\arrow[rr,rightharpoonup,yshift=0.25ex]\arrow[rr,leftharpoondown,yshift=-0.25ex]& & \rd\\
\end{tikzcd}\label{eq.commdiag}
\end{equation}

\subsection{Higher Arity and Generalisations}

The discussion in this subsection is just an overview of what might, perhaps, become the subject of another paper. 

In Section \ref{subsec.mainbij}, we constructed a bijection between $\rd$, the set of rook placements on $\ds$, and $\T$, the set of increasing binary trees on $n+1$ vertices. We shall now look at some further generalisations of this bijection.

For $k\in \mathbb{N}_{>0}$, let $\ks$ denote the Young diagram $(kn,k(n-1),\ldots, k)$ and we call these $k$-tuple staircases. Let $\rk$ denote the set of rook placements on $\ks$ with exactly one rook in each row and at most one rook in each column. Also, let $\Tkn$ denote the set of increasing $k$-ary trees on $n$ vertices, i.e., vertex-labelled trees in which each internal vertex has zero or one first child, zero or one second child, and so on, and the label of each vertex is greater than the label of its parent. We can construct a bijection $\Pk:\rk\to \Tk$ such that for any $T\in \rk$, $\Pk(T)$ is given as follows:

\begin{itemize}
\item $1$ is a root,
\item for $1\leq j \leq k$, if the $j^{\mathrm{th}}$ column of the $u^{\mathrm{th}}$ block is empty, the vertex $u$ has no $j^{\mathrm{th}}$ child. If the $j^{\mathrm{th}}$ column of the $u^{\mathrm{th}}$ block has a rook in row $v$, the vertex $u$ has  $v+1$ as its $j^{\mathrm{th}}$ child.
\end{itemize}
It is not difficult to see that this map is indeed a bijection between $\rk$ and $\Tk$, and has a fairly simple inverse bijection. 

A bijection between $k$-Stirling permutations on the multiset $\{1^k,\ldots,n^k\}$ (permutations on the multiset $\{1^k,\ldots,n^k\}$ avoiding the pattern $2$-$1$-$2$) and $\Tkkn$ was mentioned by Park in \cite{Park1994} who attributes it to Gessel. The details of this bijection were first described by Janson, Kuba, and Panholzer in \cite[Theorem 1]{Janson2011}. Thus, this generalises Theorem~\ref{thm.mainbij} by establishing a bijection between $k$-Stirling permutations on the multiset $\{1^k,\ldots,(n+1)^k\}$, increasing $(k+1)$-ary trees on $(n+1)$ vertices and rook placements on $\rkk$. We can also track several statistics of interest in these bijections the details of which might be stated out in a different paper.

We can go even further and to even more generality. Given a tuple $s = (s_1,\ldots,s_n)\in \mathbb{N}_{>0}^{n}$, let $\mathfrak{S}_{s}$ denote the set of generalised $s$-Stirling permutations, i.e., permutations on the multiset $\{1^{s_1},\ldots,n^{s_k}\}$ avoiding the pattern $2$-$1$-$2$. Also, let $\mathcal{T}_{s}$ denote the set of increasing $s$-trees, which are increasing labelled trees on the vertex set $[n]$ such that vertex $i$ has zero or one first child, zero or one second child, $\ldots$, zero or one $s_i^{\mathrm{th}}$ child. Kuba and Panholzer in \cite[Theorem~1]{KubaPanholzer2011} have constructed a bijection between the sets $\mathfrak{S}_s$ and $\mathcal{T}_{s+\mathbbm{1}}$.

Now, coming back to rook placements, consider the shape given by the partition $(s_1+\ldots+s_n,\ldots, s_{n-1}+s_n,s_n)$ and let us denote it by $\delta_s$. Let $\mathrm{R}\delta_s$ denote the set of rook placements on $\delta_s$ such that there is exactly one rook in each row and at most one rook in each column.  Let $t = (s_1,\ldots, s_n,m)$ for some $m\in \mathbb{N}_{>0}$. We can construct a bijection between the sets $\mathrm{R}\delta_s$ and $\mathcal{T}_{t}$ as follows:
\begin{itemize}
\item $1$ is a root,
\item for $1\leq j \leq s_u$, if the $j^{\mathrm{th}}$ column of the $u^{\mathrm{th}}$ block is empty, the vertex $u$ has no $j^{\mathrm{th}}$ child. If the $j^{\mathrm{th}}$ column has a rook in row $v$, the vertex $u$ has  $v+1$ as its $j^{\mathrm{th}}$ child.
\end{itemize}

We are yet to find the complete picture to be able to make a commutative diagram analogous to the one in Equation~\eqref{eq.commdiag}. To be able to do this we intend to find an analogous construction for profiles of generalised Stirling permutations and perhaps extend the bijection by Fran\c{c}on and Viennot in \cite{Francon1979}.

\section{Acknowledgements}

Most of this work was done during my masters at Universit\'e Paris-Est Marne-la-Vall\'ee (now Universit\'e Gustave Eiffel) in 2018-19.  My thesis advisors were Samuele~Giraudo and Matthieu~Josuat-Verg\`es. I would also like to thank Jean-Christophe~Novelli and Matthieu~Fradelizi for being the examiners in my thesis defence and for their helpful comments on my thesis. 

The remaining part of this work was mostly done while preparing for an online talk at the seminar series run by Manjil~P.~Saikia and Parama Dutta (\href{https://manjilsaikia.in/seminar/}{https://manjilsaikia.in/seminar/}), and I thank them for inviting me as a speaker. I thank Vasu Tewari for useful correspondence, A.R.~Balasubramanian for advice on structuring this paper, and the anonymous reviewers for their helpful remarks. Finally, I would like to thank Xavier~Viennot for teaching me how to think bijectively.

I was financially supported by Labex B\'ezout (\href{http://bezout.univ-paris-est.fr/}{http://bezout.univ-paris-est.fr/}) during my masters.

\bibliographystyle{plainurl}
\bibliography{references}

\end{document}